\pgfplotsset{compat=newest}
\pgfplotsset{my style/.append style={
               axis x line=middle, 
               axis y line= middle,
               xlabel={$x$}, 
               ylabel={$y$},
               legend pos = south east,
               label style={font=\footnotesize},
               tick label style={font=\tiny}}}
\newtheorem{theorem}{Theorem}[section]
\newtheorem{lemma}[theorem]{Lemma}
\newtheorem{corollary}[theorem]{Corollary}
\newtheorem*{theorem*}{Theorem}
\theoremstyle{definition}
\theoremstyle{remark}
\definecolor{mygray}{RGB}{105, 105, 105}
\numberwithin{equation}{section}
\newcommand{\del}{\backslash}
\newcommand{\C}{\mathcal{C}}
\newdimen\GridSize
\tikzset{
    GridSize/.code={\GridSize=#1},
    GridSize=6pt
}
\newdimen\LineSpace
\tikzset{
    line space/.code={\LineSpace=#1},
    line space=6.5pt
}
\begin{document}

\title{The symmetric strong circuit elimination property}

\author{Christine Cho}
\address{Mathematics Department\\
  Louisiana State University\\
  Baton Rouge, Louisiana}
\email{ccho3@lsu.edu}

\author{James Oxley}
\address{Mathematics Department\\
  Louisiana State University\\
  Baton Rouge, Louisiana}
\email{oxley@math.lsu.edu}

\author{Suijie Wang}
\address{School of Mathematics\\
  Hunan University\\
  Changsha 410082, Hunan, P. R. China}
\email{wangsuijie@hnu.edu.cn}

\subjclass{05B35}
\date{\today}
\keywords{strong circuit elimination, matroid circuit axioms, skew circuits}

\begin{abstract}
    If $C_1$ and $C_2$ are circuits in a matroid $M$ with $e_1$ in $C_1-C_2$ and $e$ in $C_1\cap C_2$, then $M$ has a circuit $C_3$ such that $e\in C_3\subseteq (C_1\cup C_2)-e$. 
    This strong circuit elimination axiom is inherently asymmetric. 
    A matroid $M$ has the symmetric strong circuit elimination property (SSCE) if, when the above conditions hold and $e_2\in C_2-C_1$, there is a circuit $C_3'$ with $\{e_1,e_2\}\subseteq C_3'\subseteq (C_1\cup C_2)-e$.
    We prove that a  connected matroid has this property if and only if it has no two skew circuits.
    We also characterize such matroids in terms of forbidden series minors, and we give a new matroid axiom system that is built around a modification of SSCE. 
\end{abstract}
\maketitle

\section{Introduction}
A matroid $M$ has the \textit{symmetric strong circuit elimination property} (SSCE) if, whenever $C_1$ and $C_2$ are circuits and $e_1,e_2,$ and $e$ are elements such that $e_1\in C_1-C_2$, $e_2\in C_2-C_1$, and $e\in C_1\cap C_2$, there is a circuit $C_3$ that contains $\{e_1,e_2\}$ and is contained in $(C_1\cup C_2)-e$. 
Sets $X$ and $Y$ in a matroid are \textit{skew} if $r(X)+r(Y)=r(X\cup Y)$. 
The next theorem, the main result of this paper, gives several characterizations of matroids satisfying SSCE. 
A matroid $M$ is \textit{unbreakable} if $M$ is connected and $M/F$ is connected for every flat $F$ of $M$.

\begin{theorem}\label{main}
    The following are equivalent for a connected matroid $M$.    
    \begin{enumerate}[label=(\roman*)]
        \item $M$ has the symmetric strong circuit elimination property; 
        \item $M$ has no pair of skew circuits;
        \item for all integers $k$ and $l$ exceeding two, $M$ has no series minor isomorphic to $S(U_{k-2,k},U_{l-2,l})$; and
        \item $M^*$ is unbreakable. 
    \end{enumerate}
\end{theorem}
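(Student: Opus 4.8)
The plan is to prove the four easy facts that flow out of condition $(ii)$ — namely $(ii)\Rightarrow(i)$, $(ii)\Rightarrow(iii)$, and the full equivalence $(ii)\Leftrightarrow(iv)$ — together with the one substantial implication back into $(ii)$, that skew circuits force a forbidden series minor. Throughout I will use the reformulation of $(ii)$ that drops out of the duality argument: a matroid has no pair of skew circuits if and only if it is connected and $M|A$ is connected for every set $A$ that is a union of circuits of $M$.

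I would begin with $(ii)\Leftrightarrow(iv)$. A set $F$ is a flat of $M^*$ precisely when $E-F$ is a union of circuits of $M$, and $M^*/F=(M\backslash F)^*$ is connected precisely when $M\backslash F=M|(E-F)$ is connected; hence $M^*$ is unbreakable if and only if $M$ is connected and $M|A$ is connected for every union of circuits $A$. If such an $A$ has $M|A$ disconnected, each block of a separation is itself a union of circuits and so contains a circuit, and two circuits drawn from different blocks are disjoint and skew because $M|A$ is the corresponding direct sum; conversely, if $C_1,C_2$ are skew circuits then $C_1\cup C_2$ is a union of circuits and $M|(C_1\cup C_2)=M|C_1\oplus M|C_2$ is disconnected. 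This gives both $(ii)\Leftrightarrow(iv)$ and the reformulation stated above.

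Given that reformulation, $(ii)\Rightarrow(i)$ is short: for SSCE data $C_1,C_2,e_1,e_2,e$, the ordinary strong circuit elimination axiom (applied with the two circuits in each order) produces a circuit through $e_1$ and a circuit through $e_2$, each contained in $(C_1\cup C_2)-e$; letting $A$ be the union of all circuits of $M$ contained in $(C_1\cup C_2)-e$, we have $e_1,e_2\in A$ and $M|A$ connected, so some circuit of $M|A$ — a circuit of $M$ inside $(C_1\cup C_2)-e$ — contains the distinct elements $e_1$ and $e_2$, as required. For $(ii)\Rightarrow(iii)$ I would prove the contrapositive via a lemma that passing to a series minor cannot introduce skew circuits that were not already reflected in $M$: if $N$ is a series minor of $M$ with a pair of skew circuits then $M$ has a pair of skew circuits. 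Deletion is immediate, and for contracting an element $f$ lying in a two-element cocircuit $\{f,f'\}$ one splits according to how a circuit of $M/f$ lifts to $M$, using that every circuit of $M$ meets $\{f,f'\}$ in zero or two elements to keep track of ranks. Since a short computation shows $S(U_{k-2,k},U_{l-2,l})$ has the disjoint circuits $E_1-p$ and $E_2-p$, and deleting $p$ converts a series connection into a direct sum so that these circuits are skew, any matroid with such a series minor has skew circuits.

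The real content is in the remaining implications $(iii)\Rightarrow(ii)$ and $(i)\Rightarrow(ii)$, both of which I would derive from one structural statement: if a connected matroid $M$ has a pair of skew circuits, then $M$ has a series minor isomorphic to $S(U_{k-2,k},U_{l-2,l})$ for some $k,l>2$. In contrapositive this is exactly $(iii)\Rightarrow(ii)$; combined with a routine but case-heavy verification that SSCE itself is inherited by series minors — deletion immediate, series contraction again split according to the lift of the circuits, using that contracting a connected matroid leaves it connected — and with the observation that $S(U_{k-2,k},U_{l-2,l})$ violates SSCE (take two of its circuits formed by splicing a circuit of one side through $p$ with a circuit of the other side through $p$, and forbid the element $p$), it also gives $(i)\Rightarrow(ii)$. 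To build the series minor from skew circuits $C_1,C_2$ in the connected matroid $M$, I would pick a circuit $D$ meeting both with $|D\setminus(C_1\cup C_2)|$ minimum, deduce from minimality that $D$ meets each of $C_1,C_2$ in a single element and that $D\setminus(C_1\cup C_2)\ne\emptyset$ (otherwise $D$ would lie in one block of the direct sum $M|(C_1\cup C_2)$), and then apply a sequence of deletions and series contractions — or, what may be cleaner, a minimal-counterexample argument — to reduce the relevant restriction of $M$ to the series connection of two uniform matroids at a single basepoint. The main obstacle is precisely this last reduction: one must keep $C_1$ and $C_2$ as spanning circuits of the two sides while eliminating every circuit that joins $C_1$ to $C_2$ without passing through the basepoint, and it is there that the minimality of $D$ is the indispensable input.
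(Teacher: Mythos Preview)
Your overall architecture matches the paper's: everything routes through the structural lemma ``skew circuits force an $S(U_{k-2,k},U_{l-2,l})$ series minor,'' and your treatments of $(ii)\Leftrightarrow(iv)$, $(ii)\Rightarrow(i)$, the inheritance of SSCE under series minors, and the failure of SSCE in $S(U_{k-2,k},U_{l-2,l})$ are all correct and essentially what the paper does (the paper simply cites $(ii)\Leftrightarrow(iv)$ rather than reproving it).

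The gap is in your sketch of the structural lemma itself. The assertion that minimality of $|D\setminus(C_1\cup C_2)|$ forces $D$ to meet each $C_i$ in a single element is false: already in $M=S(U_{2,4},U_{2,4})$ with skew circuits $C_i=E_i-p$, every circuit meeting both sides contains $p$ and meets each $C_i$ in \emph{two} elements. What minimality of $D$ actually buys---and this is the step you are missing---is that any two elements $f,g$ of $D-(C_1\cup C_2)$ are in series in $M|(C_1\cup C_2\cup D)$. The argument: if not, take a circuit $K$ with $f\in K$, $g\notin K$; by minimality of $D$ the circuit $K$ cannot meet both $C_i$, so say $K\cap C_2=\emptyset$ while $K$ meets $C_1-D$; now apply strong circuit elimination to $D$ and $K$, eliminating $f$ but keeping an element of $D\cap C_2$, to produce a circuit meeting both $C_i$ with strictly smaller overflow, a contradiction. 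Once $D-(C_1\cup C_2)$ is a series class, series contraction reduces it to a single element $e$, giving $E(M)=C_1\cup C_2\cup e$; a short corank computation ($r^*(M)=3$, with $C_1\cup e$ and $C_2\cup e$ as hyperplanes of $M^*$) then identifies this series-minor-minimal matroid as $S(U_{k-2,k},U_{l-2,l})$. Your description of the endgame as ``eliminating every circuit that joins $C_1$ to $C_2$ without passing through the basepoint'' is not the mechanism---those circuits are absent automatically once $M\backslash e=(M|C_1)\oplus(M|C_2)$.
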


To see that not every connected matroid has the symmetric strong circuit elimination property, consider the matroid $N_5$ that is obtained from a $3$-circuit $\{e_1, e_2,e\}$ by adding $f_i$ in parallel to $e_i$ for each $i$ in $\{1,2\}$. 
Consider the circuits $C_1=\{e_1, f_2,e\}$ and $C_2=\{e_2, f_1,e\}$. 
Then $e_1\in C_1-C_2$ and $e_2\in C_2-C_1$, but $N_5$ has no circuit contained in $(C_1 \cup C_2) - e$ that contains $\{e_1,e_2\}$. 
Clearly $N_5$ has $\{e_1,f_1\}$ and $\{e_2,f_2\}$ as skew circuits.

The equivalence between (ii) and (iii) in Theorem \ref{main} extends a result of Drummond, Fife, Grace, and Oxley \cite[Proposition 15]{cd}, which shows that a connected binary matroid $M$ has a pair of skew circuits if and only if $M$ has a series minor isomorphic to $N_5$. 
Note that $N_5$ is isomorphic to the series connection of two copies of $U_{1,3}$.   
A matroid $M$ is \textit{circuit-difference} if $C_1\Delta C_2$ is a circuit for every distinct intersecting pair of circuits $C_1$ and $C_2$ of $M$.   
Drummond et al. \cite[Theorem 1]{cd} proved the following. 

\begin{theorem}\label{regcd}
    A connected regular matroid is circuit difference if and only if it has no pair of skew circuits. 
\end{theorem}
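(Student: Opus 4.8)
The forward implication is immediate from Theorem~\ref{main}. Suppose the connected regular matroid $M$ is circuit-difference and let $C_1,C_2$ be circuits with $e_1\in C_1-C_2$, $e_2\in C_2-C_1$, and $e\in C_1\cap C_2$. Since $M$ is binary, $C_1\triangle C_2$ is a circuit; it contains $C_1-C_2$ and $C_2-C_1$, hence $\{e_1,e_2\}$, and it avoids $C_1\cap C_2$, hence lies in $(C_1\cup C_2)-e$. So $C_3=C_1\triangle C_2$ witnesses the symmetric strong circuit elimination property, and by Theorem~\ref{main}, $M$ has no pair of skew circuits.

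For the converse I would prove the contrapositive within the class of connected regular matroids: if $M$ is not circuit-difference, then $M$ has two skew circuits. The plan is to reduce to the $3$-connected case and then invoke Seymour's decomposition theorem. If $M$ is connected but not $3$-connected, write $M=M_1\oplus_2 M_2$ with basepoint $p$; if neither $M_1\backslash p$ nor $M_2\backslash p$ is independent, then a circuit of $M_1\backslash p$ and a circuit of $M_2\backslash p$ are skew circuits of $M$, so we may assume one of $M_1,M_2$ is a circuit, that is, $M$ is a series extension of the other part. Since neither the presence of skew circuits nor the failure of circuit-difference is affected by series extensions or their inverses, it suffices to treat $3$-connected $M$; an analogous analysis of the separations in Seymour's decomposition (the building block $R_{12}$ and all nondegenerate $3$-sums introduce skew circuits) then leaves only the cases in which $M$ is graphic, cographic, or isomorphic to $R_{10}$.

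When $M=M(G)$, the rank formula for graphic matroids shows $M$ has a pair of skew circuits exactly when $G$ has two cycles meeting in at most one vertex; so I must show that if every two cycles of $G$ meet in at least two vertices, then $C_1\triangle C_2$ is a single cycle for all intersecting cycles $C_1,C_2$ of $G$. Writing the common edge set of $C_1$ and $C_2$ as a disjoint union of arcs and tracking how $C_1$ and $C_2$ thread those arcs, one finds in each configuration that either $C_1\triangle C_2$ is connected---and hence a cycle, since every vertex has degree two in it---or else two cycles emerge that meet in at most one vertex, contrary to hypothesis. The cographic case follows by the dual argument, with bonds of $G$ in place of cycles, and $M=R_{10}$ is handled by inspection: every circuit of $R_{10}$ has at least four elements, so it has no skew circuits, and one checks directly that the symmetric difference of any two intersecting circuits is again a circuit.

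The main obstacle is the converse, and within it the graph-theoretic heart: upgrading ``every two cycles meet in two vertices'' to ``every two intersecting cycles have connected symmetric difference,'' and the parallel statement for bonds. The reduction also needs care to confirm that the only low-order separations compatible with having no skew circuits are of the degenerate, series-extension type.
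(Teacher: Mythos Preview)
The paper does not prove Theorem~\ref{regcd} at all: it is quoted verbatim as Theorem~1 of Drummond, Fife, Grace, and Oxley~\cite{cd}, so there is no in-paper argument to compare your proposal against. Your forward direction is correct, and since Theorem~\ref{main} is established in Section~3 without any appeal to Theorem~\ref{regcd}, invoking it here creates no circularity.

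Your converse strategy, however, has genuine gaps beyond the graph-theoretic obstacle you already flag. The claim that ``a circuit of $M_1\backslash p$ and a circuit of $M_2\backslash p$ are skew circuits of $M$'' in a $2$-sum $M=M_1\oplus_2 M_2$ is false as stated: in the parallel connection one has $r(D_1\cup D_2)=r(D_1)+r(D_2)-1$ precisely when $p\in\cl_{M_1}(D_1)\cap\cl_{M_2}(D_2)$, and this survives the deletion of $p$. (For a non-regular witness take $M_1=M_2=U_{2,4}$; the resulting $2$-sum has no skew circuits yet is not circuit-difference, which is exactly why regularity is essential in Theorem~\ref{regcd}.) You would need to show that in the \emph{regular} case one can always choose $D_1,D_2$ so that at least one of them does not span $p$, and you have not done so; the analogous issue recurs, worse, for $3$-sums. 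Also, the cographic case is not simply ``the dual argument'': the circuit-difference property is not self-dual, so you cannot just replace cycles by bonds and be done. If you want to see how the result is actually obtained, the route in~\cite{cd} is considerably lighter than the full Seymour decomposition and ties in with the series-minor characterisation (the $N_5$ criterion) that the present paper already records.
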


\noindent Combining Theorems \ref{main} and \ref{regcd}, we immediately obtain the following. 

\begin{corollary}
    A connected regular matroid has the symmetric strong circuit elimination property if and only if $M$ is circuit-difference. 
\end{corollary}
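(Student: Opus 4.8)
The plan is to obtain this as an immediate consequence of Theorems~\ref{main} and~\ref{regcd}, with no new argument required. The key observation is that the condition ``$M$ has no pair of skew circuits'' serves as a common hinge: it appears both in part~(ii) of Theorem~\ref{main} and in the statement of Theorem~\ref{regcd}.

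Concretely, let $M$ be a connected regular matroid. First I would invoke the equivalence of parts~(i) and~(ii) of Theorem~\ref{main}, which applies since $M$ is connected: thus $M$ has the symmetric strong circuit elimination property if and only if $M$ has no pair of skew circuits. Next I would apply Theorem~\ref{regcd}, whose hypotheses (connected and regular) are exactly what $M$ satisfies: thus $M$ is circuit-difference if and only if $M$ has no pair of skew circuits. Chaining these two biconditionals through the shared middle condition yields that $M$ has the symmetric strong circuit elimination property if and only if $M$ is circuit-difference, which is the assertion of the corollary.

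I do not expect any genuine obstacle here, since all the substantive content is already packaged in Theorem~\ref{main} (specifically the implication $(\mathrm{i})\Leftrightarrow(\mathrm{ii})$) and in Theorem~\ref{regcd}. The only point worth checking is that the hypotheses line up, and they do: connectedness is assumed in all three statements, while regularity is precisely the additional hypothesis needed to apply Theorem~\ref{regcd}.
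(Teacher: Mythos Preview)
Your proposal is correct and matches the paper's approach exactly: the paper states that the corollary follows immediately by combining Theorems~\ref{main} and~\ref{regcd}, which is precisely the chaining through ``no pair of skew circuits'' that you describe.
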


Oxley and Pfeil \cite[Theorem 1.1]{ub} proved that a loopless matroid $M$ is unbreakable if and only if $M^*$ has no pair of skew circuits. 
Thus (ii) and (iv) in Theorem \ref{main} are equivalent.
Oxley and Pfeil gave several other characterizations of connected matroids with no skew circuits. Thus the list of equivalent statements in Theorem \ref{main} could be extended. 

Theorem \ref{main} gives several characterizations of when a connected matroid does not have a pair of skew circuits. 
The next theorem characterizes when a connected binary matroid has three skew circuits where, for any integer $k$ exceeding one, a matroid $M$ has $k$ \textit{skew circuits} if $M$ has a set $\{C_1,C_2,\ldots,C_k\}$ of $k$ circuits  such that 
        \[
            M\big|\left(\cup_{i=1}^k C_i\right)=(M|C_1)\oplus (M|C_2)\oplus\ldots\oplus (M|C_k).
        \]

\begin{theorem}\label{3thm}
    Let $M$ be a connected binary matroid with three skew circuits. Then $M$ has a series minor isomorphic to $M(G)$, where $G$ is one of the graphs shown in Figure \ref{3sk}.  
\end{theorem}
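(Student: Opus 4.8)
The plan is to analyse a minimal series minor of $M$. Among all series minors of $M$ that are connected and have three skew circuits, let $N$ be one with $|E(N)|$ as small as possible; being a minor of a binary matroid, $N$ is binary. Fix skew circuits $C_1,C_2,C_3$ of $N$ with $|C_1|+|C_2|+|C_3|$ minimum, and put $Z=C_1\cup C_2\cup C_3$ and $T=E(N)\setminus Z$. The goal is to show that $N$ is graphic and is the cycle matroid of one of the graphs in Figure \ref{3sk}; as $N$ is a series minor of $M$, this proves the theorem.

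The first step is to extract structure from minimality. For $e\in T$ the circuits $C_1,C_2,C_3$ are still skew circuits of $N\setminus e$, so minimality forces $N\setminus e$ to be disconnected; thus every element outside $Z$ is essential for the connectivity of $N$. The second step is to show that each $C_i$ is small. Since $N|Z$ is a direct sum of three circuits, if some $|C_i|\ge 3$ one can, using orthogonality together with this direct-sum structure, delete elements of $N$ so as to place an element $f$ of $C_i$ in a $2$-cocircuit while keeping $N$ connected and keeping the remaining two circuits skew; contracting $f$ then produces a series minor in which $C_i$ has been replaced by the shorter circuit $C_i-f$, contradicting the minimality of $|C_1|+|C_2|+|C_3|$. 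This is the three-circuit analogue of the reduction used to find $N_5$ in \cite[Proposition 15]{cd}. The outcome should be that each $C_i$ is forced down to a digon, or, in a few configurations in which $2$-connectivity of $N$ obstructs further shrinking, to a triangle.

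The third step bounds $|T|$. Because $N|Z$ has exactly three components, $T$ must link $C_1,C_2,C_3$ into a connected whole, and minimality makes this linking as cheap as possible: any path of elements internal to $T$ contracts and hence is short, so $T$ arranges $C_1,C_2,C_3$ in one of a small number of ``path-like'' or ``star-like'' patterns. At this point $|E(N)|$ is bounded by an absolute constant, so only finitely many candidates for $N$ remain. Finally one lists the connected binary matroids of the permitted sizes that carry three skew circuits, discards any that have a proper series minor of the same kind, and checks that every surviving matroid is the cycle matroid of one of the graphs in Figure \ref{3sk}; in particular the minimal obstructions turn out to be graphic, exactly as $N_5$ is in the two-circuit case.

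The crux is the second and third steps: carrying out the series-minor reductions while simultaneously preserving connectivity and the ``three skew circuits'' property. Shrinking $C_i$ is a legitimate series-minor move only once one of its elements lies in a $2$-cocircuit, and arranging that by prior deletions, without disconnecting $N$ and without coalescing two of the $C_i$, is a delicate and somewhat case-heavy argument that leans on orthogonality and on the direct-sum structure of $N|Z$. Once the size of a minimal $N$ has been controlled, the concluding enumeration is routine, if a little tedious.
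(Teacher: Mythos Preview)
Your plan is a direct ``bound the minimal example and enumerate'' attack, while the paper instead peels the problem apart via series connection. In the paper, a minimal $M$ has some $e\notin C_1\cup C_2\cup C_3$, and since $M\setminus e$ is disconnected, $M=S((M_1;e),(M_2;e))$; Lemma~\ref{singlecct} then forces $M_2\cong U_{1,3}$, and Lemma~\ref{2skewlem} classifies the possible $M_1$ (five graphs $G_1,\dots,G_5$), giving $M\cong M(L_i)$. All the work is pushed into Lemma~\ref{2skewlem}, whose proof is itself a short series-connection case analysis.

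There is a genuine gap in your Step~2. You predict that in a minimal $N$ each skew circuit is forced down to a digon, or at worst a triangle. This is false: the graph $L_4$ in Figure~\ref{3sk} is series-minor-minimal and carries a skew $4$-circuit (coming from the $4$-cycle of $K_4$ inside $G_4$). So the shrinking procedure you describe---deleting elements to put some $f\in C_i$ into a $2$-cocircuit, while keeping $N$ connected and the other two circuits skew---cannot always succeed; in some minimal configurations the connectivity constraints genuinely obstruct reducing $|C_i|$ below~$4$. Your outline treats this as a routine reduction ``leaning on orthogonality'', but it is precisely here that the argument breaks.

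Step~3 and the final enumeration are only sketched, but that is less serious: once you have correct size bounds they could in principle be carried out. The real issue is that Step~2, as stated, is simply wrong, and without it you have no bound on $|E(N)|$ at all. If you want to salvage the direct approach, you would need to replace Step~2 with something that explicitly allows a $4$-circuit and explains exactly when the shrinking halts; at that point you are essentially reproving Lemma~\ref{2skewlem} by hand. The paper's decomposition route is cleaner because it isolates that two-circuit analysis as a separate lemma and then reuses it.
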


\begin{figure}[ht]
    \centering
\begin{tikzpicture}[scale=1]
    \draw[thick] (0,1)to[out=240,in=120] (0,0)to[out=30, in=150] (1,0)to[out=60, in=300](1,1);
    \draw[thick] (0,1)to[out=300,in=60] (0,0)to[out=330, in=210] (1,0)to[out=120, in=240](1,1);
    \draw[thick] (0,1) -- (1,1);

    \draw[fill=black] (0,0) circle (1.9pt); 
    \draw[fill=black] (1,0) circle (1.9pt); 
    \draw[fill=black] (0,1) circle (1.9pt);
    \draw[fill=black] (1,1) circle (1.9pt); 
    \draw (.5,-0.5) node {$L_1$};

    \begin{scope}[shift={(2.7,.3)},rotate={180},scale=.8]
    \draw[thick] (0,0)to[out=180,in=290] (-.866,.5)to[out=0, in=120] (0,0)to[out=60, in=180](.866,.5)to[out=240,in=0](0,0)to[out=240,in=120](0,-1)to [out=60,in=300](0,0);
    \draw[thick] (-.866,0.5)to[out=270,in=150] (0,-1);
    \draw[thick] (.866,0.5)to[out=270,in=30] (0,-1);

    \draw[fill=black] (0,0) circle (2.3pt); 
    \draw[fill=black] (.866,.5) circle (2.3pt); 
    \draw[fill=black] (-.866,.5) circle (2.3pt); 
    \draw[fill=black] (0,-1) circle (2.3pt);
    \draw (0,0.95) node {$L_2$};
    
    \end{scope}

    \begin{scope}[shift={(5.2,0)},scale=.7]
    \draw[thick] (-.75,0)--(.75,0)--(0,1.5)--(-.75,0);
    \draw[thick] (-1.25,1.25)--(0,1.5)--(1.25,1.25);
    \draw[thick] (-.75,0)to[out=150,in=270] (-1.25,1.25)to[out=330, in=90] (-.75,0);
    \draw[thick] (.75,0)to[out=30,in=270] (1.25,1.25)to[out=210, in=90] (.75,0);
    \draw[fill=black] (-.75,0) circle     (2.6pt); 
    \draw[fill=black] (.75,0) circle      (2.6pt); 
    \draw[fill=black] (0,1.5) circle      (2.6pt); 
    \draw[fill=black] (-1.25,1.25) circle (2.6pt);
    \draw[fill=black] (1.25,1.25) circle  (2.6pt);
    \draw (0,-0.65) node {$L_3$};
    
\end{scope}

    \begin{scope}[shift={(8.5,0)},scale=.85]
    \draw[thick] (-1,0)--(0,0)--(-1,1)--(0,1)--(-1,0);
    \draw[thick] (-1.75,0.75)--(-1,1);
    \draw[thick] (.75,.75)--(0,1);
    \draw[thick] (-1,0)to[out=160,in=280] (-1.75,.75)to[out=350, in=110] (-1,0);
    \draw[thick] (0,0)to[out=20,in=260] (.75,.75)to[out=190, in=70] (0,0);

    \draw[fill=black] (-1.75,0.75) circle     (2.3pt); 
    \draw[fill=black] (.75,.75) circle        (2.3pt); 
    \draw[fill=black] (-1,0) circle           (2.3pt); 
    \draw[fill=black] (-1,0) circle           (2.3pt); 
    \draw[fill=black] (0,0) circle            (2.3pt); 
    \draw[fill=black] (-1,1) circle           (2.3pt); 
    \draw[fill=black] (0,1) circle            (2.3pt);
    \draw (-0.5,-0.5) node {$L_4$};
\end{scope}
  \begin{scope}[shift={(10.2,0)}, scale=1.1] 
   \coordinate (n) at (0.5,1.25); \coordinate (e) at (1.25,.75);  \coordinate (s) at (0.5,0); \coordinate (w) at (-0.25,.75); 
   \coordinate (m) at (0.5,0.75);
    \draw[thick] (n)--(e); \draw[thick] (n)--(m); \draw[thick] (n)--(w); 
    \draw[thick] (s)to [bend left=23] (e); \draw[thick] (s)to [bend right=23] (e);
    \draw[thick] (s)to [bend left=23] (w); \draw[thick] (s)to [bend right=23] (w);
    \draw[thick] (s)to [bend left=25] (m); \draw[thick] (s)to [bend right=25] (m);
    
    \draw[fill=black] (n) circle (1.8pt); 
    \draw[fill=black] (e) circle (1.8pt);
    \draw[fill=black] (m) circle (1.8pt);
    \draw[fill=black] (s) circle (1.8pt);
    \draw[fill=black] (w) circle (1.8pt); 
    \draw (.5,-0.4) node {$L_5$};
\end{scope}

\end{tikzpicture}
\caption{ }\label{3sk} 
\end{figure}

After a section of preliminaries, the main theorem is proved in Section 3. 
Section 4 proves Theorem \ref{3thm}.
Finally, in Section 5, we give a new circuit axiom system for matroids that is built around a modification of SSCE. 

\section{Preliminaries}
The terminology and notation used here will follow \cite{ox}. 
The class of matroids satisfying SSCE is not closed under taking contractions.
To see this, recall that $N_5$ has a pair of skew circuits. 
Although $M(K_4)$ has no pair of skew circuits, every single-element contraction of $M(K_4)$ is isomorphic to $N_5$.
We now show that the class of matroids satisfying SSCE is closed under series contraction. 

\begin{lemma}\label{closed}
    The class of matroids satisfying the symmetric strong circuit elimination property is closed under series minors.  
\end{lemma}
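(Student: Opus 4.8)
The plan is to reduce the lemma to two elementary moves and verify that each preserves SSCE: (a) deleting a single element, and (b) contracting an element $e_0$ that lies in a $2$-element cocircuit $\{e_0,f_0\}$ of $M$. Since every series minor of $M$ is produced by a finite sequence of moves of these two types, induction on the length of such a sequence then gives the result. Move (a) I would dispose of first: the circuits of $M\setminus f$ are exactly the circuits of $M$ not containing $f$, so any configuration $C_1,C_2,e_1,e_2,e$ violating SSCE in $M\setminus f$ is also one in $M$, and the circuit $C_3\subseteq(C_1\cup C_2)-e$ that SSCE provides in $M$ automatically misses $f$ and is therefore a circuit of $M\setminus f$. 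The real work is move (b).

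For $M/e_0$ I would first record what its circuits look like, using orthogonality with the cocircuit $\{e_0,f_0\}$: every circuit of $M$ contains $e_0$ if and only if it contains $f_0$. From this I extract a \emph{lifting} statement and a \emph{projecting} statement. Lifting: every circuit $D$ of $M/e_0$ equals $C-e_0$ for some circuit $C$ of $M$ with $D\subseteq C\subseteq D\cup\{e_0\}$ (namely $C=D$ if $f_0\notin D$ and $C=D\cup e_0$ if $f_0\in D$). Projecting: whenever $C$ is a circuit of $M$ with $C-e_0\neq\emptyset$, the set $C-e_0$ is a circuit of $M/e_0$ --- for $e_0\in C$ this is the standard description of $\mathcal{C}(M/e_0)$, and for $e_0\notin C$ one uses orthogonality to check that contracting $e_0$ turns no proper subset of $C$ into a dependent set.

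Given circuits $D_1,D_2$ of $M/e_0$ and elements $e_1\in D_1-D_2$, $e_2\in D_2-D_1$, $d\in D_1\cap D_2$, I would lift $D_1,D_2$ to circuits $C_1,C_2$ of $M$ as above. Since $e_0$ is not an element of $M/e_0$, none of $e_1,e_2,d$ is $e_0$, so the inclusions $D_i\subseteq C_i\subseteq D_i\cup\{e_0\}$ give $e_1\in C_1-C_2$, $e_2\in C_2-C_1$, $d\in C_1\cap C_2$, and $C_1\neq C_2$. Applying SSCE in $M$ yields a circuit $C_3$ of $M$ with $\{e_1,e_2\}\subseteq C_3\subseteq(C_1\cup C_2)-d$, and I would take $D_3=C_3-e_0$. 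Then $D_3\supseteq\{e_1,e_2\}$ is nonempty, hence a circuit of $M/e_0$ by the projecting statement; and from $(C_1\cup C_2)-e_0=D_1\cup D_2$ together with $d\neq e_0$ one gets $D_3\subseteq(D_1\cup D_2)-d$, which is exactly SSCE for $M/e_0$.

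The main thing to get right --- rather than a real obstacle --- is the circuit-level bookkeeping: establishing the lifting/projecting statements cleanly from orthogonality, and handling the degenerate cases where $f_0$ equals one of $d,e_1,e_2$ (then $f_0\in C_3$ forces $e_0\in C_3$, and $C_3-e_0$ still behaves correctly). The argument above is deliberately phrased so as not to assume anything about whether $e_0$ or $f_0$ lies in $C_3$, so these cases should not require separate treatment.
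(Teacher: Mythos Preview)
Your proof is correct and follows essentially the same approach as the paper: reduce to single deletions and single series contractions, and for the latter use the dichotomy that every circuit of $M$ either contains both of $e_0,f_0$ or neither to lift circuits of $M/e_0$ to circuits of $M$, apply SSCE there, and push the resulting circuit back down. The only difference is organizational---the paper carries out an explicit three-case analysis according to which of $C_1,C_2$ contain $f_0$, whereas your lifting/projecting statements handle all cases uniformly.
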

    \begin{proof}
       Clearly the class of matroids satisfying SSCE is closed under deletion. 
       Now suppose that $M$ satisfies SSCE and has $\{f,g\}$ as a cocircuit. 
       Then, for every circuit $C$ of $M$, either $C$ or $C-g$ is a circuit of $M/g$.  
       Let $C_1$ and $C_2$ be circuits of $M/g$ with $e$ in $C_1\cap C_2$ such that $e_1\in C_1-C_2$ and $e_2\in C_2-C_1$. 
       Since $\{f,g\}$ is a cocircuit of $M$, we have $\C(M/g)=\mathcal{D}_1\cup \mathcal{D}_2$ where 
       $\mathcal{D}_1=\{C\in \C(M):\{f,g\}\cap C=~\emptyset\}$ and $\mathcal{D}_2=\{C-g: g\in C\in \C(M)\}. $

       If $C_1,C_2\in \mathcal{D}_1$, then $M$ has a circuit $C_3$ such that $\{e_1,e_2\}\subseteq C_3\subseteq (C_1\cup C_2)-e$.
       Thus $g\not\in C_3$ so $C_3\in \C(M/g)$.
       Now suppose $C_1 \in \mathcal{D}_1$ and $C_2\in \mathcal{D}_2$. 
       Then $C_2\cup g$ is a circuit of $M$ and $M$ has a circuit $C_3$ such that $\{e_1,e_2\}\subseteq C_3\subseteq (C_1\cup C_2\cup g)-e.$
       If $g\in C_3$, then $C_3-g$ is a circuit of $M/g$ containing $\{e_1,e_2\}$ such that $C_3-g\subseteq (C_1\cup C_2)-e$.  
       If $g\not\in C_3$, then $C_3$ is a circuit of $M$ containing $\{e_1,e_2\}$ such that $C_3\subseteq (C_1\cup C_2)-e$.
       Similarly, if $C_1,C_2\in \mathcal{D}_2$, then $M$ has a circuit $C_3$ such that $C_3\subseteq (C_1\cup C_2\cup g)-e$ and $\{e_1,e_2\}\subseteq C_3$. 
       Either $C_3$ or $C_3-g$ is a circuit of $M/g$ contained in $(C_1\cup C_2)-e$ and containing $\{e_1,e_2\}$. 
    \end{proof}

The next three lemmas will be used to prove the main results. 
\begin{lemma}\label{new}
   Let $M$ be a connected matroid with ground set $D_1\cup D_2\cup e$ where $D_1$ and $D_2$ are skew circuits of $M$. Then either $M$ has a $2$-cocircuit avoiding $e$, or $M\cong S((U_{k-2,k};e),(U_{l-2,l};e))$ for some integers $k$ and $l$ exceeding two. 
\end{lemma}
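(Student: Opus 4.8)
The plan is to dualise and recognise $M^{*}$ as a rank-$3$ matroid assembled from two lines. The first step is to check that the hypotheses force $r(M^{*})=3$. Since $D_{1}$ and $D_{2}$ are skew they are disjoint; each is a nonempty circuit, so $|E(M)|\ge 3$, whence $M$ has no loops or coloops and $|D_{1}|,|D_{2}|\ge 2$. As $D_{1}\cup D_{2}$ is disjoint and contains the circuit $D_{1}$, we get $r(D_{1}\cup D_{2})\le|D_{1}|+|D_{2}|-2$, while skewness gives $r(D_{1}\cup D_{2})=r(D_{1})+r(D_{2})=|D_{1}|+|D_{2}|-2$. Since $e$ is not a coloop, $e\in\cl(D_{1}\cup D_{2})$, so $r(M)=|D_{1}|+|D_{2}|-2$ and $r(M^{*})=|E(M)|-r(M)=3$.

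Now set $N=M^{*}$, a connected matroid of rank $3$. Because $D_{1}$ and $D_{2}$ are circuits of $M$, they are cocircuits of $N$, so $H_{1}:=D_{1}\cup e$ and $H_{2}:=D_{2}\cup e$ are hyperplanes — that is, lines — of $N$; moreover $H_{1}\cup H_{2}=E(N)$ and $H_{1}\cap H_{2}=\{e\}$, so $N$ is the union of two lines meeting only at the point $e$. Under duality, a $2$-cocircuit of $M$ avoiding $e$ is the same as a parallel pair of $N$ avoiding $e$, and $S((U_{k-2,k};e),(U_{l-2,l};e))=\bigl(P((U_{2,k};e),(U_{2,l};e))\bigr)^{*}$, where $P$ denotes the parallel connection. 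So it suffices to show that $N$ has a parallel pair avoiding $e$, or $N\cong P((U_{2,k};e),(U_{2,l};e))$ for suitable $k,l>2$.

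I would then split on whether $N$ is simple. If it is not, then, being loopless, $N$ has a parallel pair $\{f,g\}$. Since $H_{1}$ and $H_{2}$ are flats containing $e$, the parallel class of $e$ is contained in $H_{1}\cap H_{2}=\{e\}$, so $e$ has no parallel partner, whence $e\notin\{f,g\}$; and a parallel class cannot meet both $D_{1}$ and $D_{2}$, since it would then lie in $H_{1}\cap H_{2}$. Hence $\{f,g\}\subseteq D_{1}$ or $\{f,g\}\subseteq D_{2}$, giving the first alternative. If $N$ is simple, then $N|H_{1}\cong U_{2,k}$ and $N|H_{2}\cong U_{2,l}$ with $k=|D_{1}|+1\ge 3$ and $l=|D_{2}|+1\ge 3$, and one computes the lines of $N$: any line other than $H_{1},H_{2}$ meets each of $H_{1},H_{2}$ in at most one point and so, since $E(N)=H_{1}\cup H_{2}$, has exactly two points, one in $D_{1}$ and one in $D_{2}$ (a two-element subset of a single $H_{i}$, or one containing $e$, would close up to all of $H_{i}$); conversely every such pair is a two-point line. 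Since a simple rank-$3$ matroid is determined by its ground set and its set of lines, $N$ is isomorphic to $P((U_{2,k};e),(U_{2,l};e))$, and dualising gives $M\cong S((U_{k-2,k};e),(U_{l-2,l};e))$.

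The rank count and the argument about parallel classes are short. The step I expect to require the most care is the last one — pinning down that a simple rank-$3$ matroid which is the union of two lines through a common point is exactly the parallel connection $P((U_{2,k};e),(U_{2,l};e))$ (equivalently, matching the circuits on the two sides) — though I do not anticipate a genuine obstacle there.
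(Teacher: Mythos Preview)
Your proof is correct and follows essentially the same route as the paper's: compute $r(M^{*})=3$, observe that $D_1\cup e$ and $D_2\cup e$ are hyperplanes of $M^{*}$, and conclude that $M^{*}$ is either non-simple away from $e$ or is the parallel connection $P((U_{2,k};e),(U_{2,l};e))$. The paper's version is much terser—it simply asserts the dichotomy for $M^{*}$—whereas you carefully verify the parallel-pair case and pin down the line structure in the simple case; your added detail is sound and exactly what the paper leaves implicit.
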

\begin{proof}
    Since $M\del e= M| D_1\oplus M|D_2$, it follows that $M$ is equal to $S((M_1;e),(M_2;e))$ where $M_1=M/D_2$ and $M_2=M/D_1$.
    Evidently, $r(M)=r(D_1)+r(D_2)=|D_1|+|D_2|-2$, so $r(M^*)=3$ and $M^*$ has $D_1\cup e$ and $D_2\cup e$ as hyperplanes.
    Thus either $M^*$ has a 2-circuit avoiding $e$, or $M^*\cong P((U_{2,k};e),(U_{2,l};e))$ for some integers $k$ and $l$ exceeding two. 
    The lemma follows immediately. 
\end{proof}

The proof of the next lemma uses the well-known fact (see, for example, \cite[Exercise 2.1.7]{ox}) that $\{x,y\}$ is a circuit of a connected matroid if and only if every circuit that contains $x$ also contains $y$.

\begin{lemma}\label{singlecct}
    Let $S((M_1;e),(M_2;e))$ be a connected matroid $M$ having a circuit contained in $E(M_2)-e$. Then $M$ has $S((M_1;e),(U_{k-2,k};e))$ as a series minor for some $k$ exceeding two. Moreover, $E(U_{k-2,k})-e$ is skew to $E(M_1)-e$ in $M$. 
\end{lemma}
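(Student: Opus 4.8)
The plan is to prove the two assertions separately. The skewness assertion is immediate once the first is proved: deleting the basepoint of a series connection gives a direct sum, $M\del e=(M_1\del e)\oplus(M_2\del e)$, so for every $X\subseteq E(M_2)-e$ and $Y\subseteq E(M_1)-e$ we get $r_M(X\cup Y)=r_{M\del e}(X)+r_{M\del e}(Y)=r_M(X)+r_M(Y)$; hence $X$ and $Y$ are skew in $M$, and the copy of $U_{k-2,k}$ produced below satisfies $E(U_{k-2,k})-e\subseteq E(M_2)-e$.

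For the series-minor assertion I would induct on $|E(M_2)|$. Since $M$ is connected, $M_1$ and $M_2$ are connected and $e$ is a non-coloop of each, and the hypothesis makes $M_2\del e$ non-free, so $M_2$ has nullity at least two; in particular $M_2$ is not a circuit. If $M_2$ is uniform then, after first deleting non-basepoint elements to reach nullity two while staying connected and uniform, $M_2\cong U_{k-2,k}$ for some $k>2$, so $M=S((M_1;e),(U_{k-2,k};e))$ and we are done; thus assume $M_2$ is not uniform.

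The pivotal observation is that every cocircuit of $M_2$ is a cocircuit of $M$, because the cocircuits of $S((M_1;e),(M_2;e))$ are those of $M_1$, those of $M_2$, and certain sets meeting both $E(M_1)-e$ and $E(M_2)-e$. Hence a series contraction of an element $f\in E(M_2)-e$ coming from a $2$-cocircuit of $M_2$ --- even one through $e$ --- is a legitimate series-minor operation on $M$ and yields $S((M_1;e),(M_2/f;e))$; likewise deleting a non-basepoint element of a $2$-circuit of $M_2$, or any non-basepoint element whose deletion keeps $M_2$ connected, yields $S((M_1;e),(M_2\del f;e))$. I would reduce by contracting a non-basepoint element of a $2$-cocircuit if $M_2$ has one; otherwise deleting a non-basepoint element of a $2$-circuit if $M_2$ has one; otherwise (so $M_2$ is simple and cosimple) deleting a suitable non-basepoint element preserving connectedness. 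In each case one checks that the reduced matroid is connected, that $e$ remains a non-coloop of the new $M_2$-side, and that that side still has a circuit avoiding $e$ --- the last being easy in the cosimple case, since a non-uniform cosimple matroid has nullity at least three, so $M_2\del e$ has nullity at least two and retains a circuit after one further deletion --- and then applies the inductive hypothesis.

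The main obstacle is the simple-cosimple case: showing that a connected, simple, cosimple, non-uniform matroid on more than four elements has a non-basepoint element whose deletion keeps it connected. That such a matroid has \emph{some} connectedness-preserving deletion is standard (a connected matroid with at least three elements that is not a circuit always does); the work is in choosing that element away from $e$, which is a connectivity argument --- one may, for instance, use the decomposition of $M_2$ into $3$-connected pieces, which for a simple cosimple matroid has no circuit or cocircuit pieces. A subsidiary care point, in the $2$-circuit case, is that the deleted parallel element may lie on the only circuit of $M_2$ avoiding $e$; that it does not is again forced by $M_2$ not being uniform. Everything else is routine manipulation of the circuit and cocircuit families of series connections and of single-element minors.
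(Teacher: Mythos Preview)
Your approach is genuinely different from the paper's. The paper takes a minimal counterexample $M$, fixes a circuit $C\subseteq E(M_2)-e$, and then chooses a circuit $D$ of $M_2$ through $e$ meeting $C$ with $|D-C|$ minimal. Minimality of $M$ forces $E(M_2)=C\cup D$; the key observation is that every $x\in(D-C)-e$ must lie in every circuit of $M_2$ through $e$ (else a smaller $|D'-C|$ exists), so $\{x,e\}$ is a cocircuit and $D-C$ is a series class. Series-contracting $D-C$ down to $\{e\}$ leaves $M_2$ with corank~$2$; since $M_2^*$ is then a simple rank-$2$ connected matroid with $\{e\}$ a flat, $M_2\cong U_{k-2,k}$. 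This is short and completely avoids any case analysis on the structure of $M_2$.

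Your inductive case split is workable, and your nullity argument (cosimple, connected, non-uniform forces $r^*\ge 3$, so two deletions still leave a circuit) correctly handles the ``does a circuit avoiding $e$ survive?'' issue in every case. The genuine gap is the simple--cosimple step. Your proposed justification---that the canonical tree decomposition of a simple cosimple matroid has no circuit or cocircuit pieces---is false: for instance, $2$-summing a copy of $M(K_4)$ onto each edge of a triangle $U_{2,3}$ gives a simple, cosimple, connected matroid whose tree decomposition has a $U_{2,3}$ piece. So that line does not establish the existence of $f\neq e$ with $M_2\backslash f$ connected. The step \emph{is} salvageable by a different argument: since $M_2$ is not a circuit, if $M_2\backslash e$ is connected it is also not a circuit (else $r^*(M_2)=2$ and $M_2$ is uniform), so it has some $f$ with $M_2\backslash e\backslash f$ connected, and cosimplicity of $M_2$ prevents $e$ from becoming a coloop of $M_2\backslash f$, whence $M_2\backslash f$ is connected; the case where $M_2\backslash e$ is disconnected needs a separate (easy) treatment via the $2$-sum structure at $e$.

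In short, your route can be completed, but it is longer and the hardest step rests on an argument you have not supplied (and whose sketched version is incorrect). The paper's minimal-$|D-C|$ trick is the cleaner idea: it locates the relevant series class in one stroke and bypasses the connectivity bookkeeping entirely.
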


\begin{proof}
    Clearly, if $M$ has $S((M_1;e),(U_{k-2,k};e))$ as a series minor, then $E(U_{k-2,k})-e$ is skew to $E(M_1)-e$ since $S((M_1;e),(U_{k-2,k};e))\del e=(M_1\del e)\oplus (U_{k-2,k}\del e)$.  
    To see that $M$ has $S((M_1;e),(U_{k-2,k};e))$ as a series minor, let $M$ be a minimal counterexample.
    Let $C$ be a circuit contained in $E(M_2)-e$.
    Choose a circuit $D$ of $M_2$ that contains $e$ and meets $C$ so that $|D-C|$ is a minimum. 
    Then $E(M_2)=D\cup C$, otherwise $S((M_1;e),(M_2|(D\cup C);e))$ is a proper series minor of $M$, contradicting the minimality of  $M$.  
    
    We now show that $D-C$ is a series class of $M_2$. 
    Take $x\in (D-C)-e$. Suppose $M_2$ has a circuit $D'$ containing $e$ and avoiding $x$. 
    Then $D'\subseteq (D-x)\cup C$, so $|D'-C|<|D-C|$.
    Therefore $x$ is in every circuit of $M_2$ containing $e$, so $\{x,e\}$ is a cocircuit of $M_2$. 
    Thus $D-C$ is a series class of $M_2$. 

Since $M$ is series-minor-minimal, it follows that $E(M_2)\cap (D-C)=\{e\}$. 
Then $r(M_2)=r(C)=|C|-1$ so $r^*(M_2)=2$. 
Since $M_2^*$ is connected of rank two having $\{e\}$ as a flat, it follows that $M_2^*\cong U_{2,k}$ for some $k$ exceeding two.
Note that $M_2^*$ has no nontrivial parallel class, since $M_2^*$ is parallel-minor-minimal. 
Hence $M_2\cong U_{k-2,k}$, so $M\cong S((M_1;e),(U_{k-2,k},e))$, a contradiction.   
\end{proof}

For the next result, recall that $N_5$ is the series connection of two copies of $U_{1,3}$. 

\begin{figure}[ht]
    \centering
\begin{tikzpicture}[scale=1.26]
\coordinate (nw) at (0,1); \coordinate (ne) at (1,1);  \coordinate (s) at (0.5,0); 
    \draw[ultra thick] (s)to [bend left=15] (ne); \draw[ultra thick] (s)to [bend right=35] (ne);
    \draw[ultra thick] (s)to [bend left=35] (nw); \draw[ultra thick] (s)to [bend right=15] (nw);
    \draw[thick] (nw) -- (ne);

    \draw[fill=black] (nw) circle (2.2pt); 
    \draw[fill=black] (ne) circle (2.2pt);
    \draw[fill=black] (s) circle  (2.2pt); 
    \draw (.5,-0.5) node {$G_1$};
    \draw (.5, 1.15) node[scale=.75] {$e$};

    \begin{scope}[shift={(2,0)}]
    \coordinate (nw) at (0,1); \coordinate (ne) at (1,1);  
    \coordinate (s) at (0.5,0); 
    \draw[ultra thick] (s)to [bend left=15] (ne); 
    \draw[ultra thick] (s)to [bend right=30] (ne);
    \draw[ultra thick] (s)to [bend left=30] (nw); 
    \draw[ultra thick] (s)to [bend right=15] (nw);
    \draw[thick] (s)to [bend left=80] (nw);
    \draw[thick] (nw) -- (ne);

    \draw[fill=black] (nw) circle (2.2pt); 
    \draw[fill=black] (ne) circle (2.2pt);
    \draw[fill=black] (s) circle  (2.2pt); 
    \draw (.5,-0.5) node {$G_2$};
    \draw (-.15, .25) node[scale=.75] {$e$};
    \end{scope}

    \begin{scope}[shift={(3.9,0)}]  
    \coordinate (nw) at (0,1); \coordinate (ne) at (1,1);  
    \coordinate (s) at (0.5,0); \coordinate (e) at (1.5,.5);
    \draw[thick] (s)to [bend left=50] (nw); 
    \draw[ultra thick] (s)to [bend left=15] (e); 
    \draw[ultra thick] (s)to [bend right=30] (e);
    \draw[ultra thick] (s)--(ne); \draw[thick] (e)--(ne);
    \draw[ultra thick] (nw) -- (ne);
    \draw[ultra thick] (nw) -- (s);

    \draw[fill=black] (nw) circle (2.2pt);
    \draw[fill=black] (ne) circle (2.2pt);
    \draw[fill=black] (s) circle  (2.2pt);
    \draw[fill=black] (e) circle  (2.2pt);
    \draw (.5,-0.5) node {$G_3$};
       \draw (-0.14, .35) node[scale=.75] {$e$};
\end{scope}

\begin{scope}[shift={(6.2,0)}] 
    \coordinate (nw) at (0,1); 
    \coordinate (ne) at (1,1);
    \coordinate (sw) at (0,0);
    \coordinate (se) at (1,0);
    \coordinate (e) at (1.5,.5);
    \draw[ultra thick] (ne)--(nw)--(se)--(sw)--(ne); \draw[thick] (sw)--(ne);
    \draw[thick] (nw)--(sw);
    \draw[thick] (ne)--(e);
    \draw[ultra thick] (se)to [bend left=30] (e); \draw[ultra thick] (se)to [bend right=30] (e);

    \draw[fill=black] (nw) circle (2.2pt); 
    \draw[fill=black] (ne) circle (2.2pt);
    \draw[fill=black] (sw) circle (2.2pt); 
    \draw[fill=black] (se) circle (2.2pt);
    \draw[fill=black] (e) circle  (2.2pt);
    \draw (.5,-0.5) node {$G_4$};
    \draw (-.15, .5) node[scale=.75] {$e$};
\end{scope}

\begin{scope}[shift={(8.5,0)}] 
   \coordinate (n) at (0.5,1); \coordinate (e) at (1,.5);  \coordinate (s) at (0.5,0); \coordinate (w) at (0,.5); 
   
    \draw[thick] (n)--(e);
    \draw[thick] (n)--(s); 
    \draw[thick] (n)--(w); 
    \draw[ultra thick] (s)to [bend left=20] (e);
    \draw[ultra thick] (s)to [bend right=35] (e);
    \draw[ultra thick] (s)to [bend left=35] (w); 
    \draw[ultra thick] (s)to [bend right=20] (w);
    
    \draw[fill=black] (n) circle (2.2pt);
    \draw[fill=black] (e) circle (2.2pt);
    \draw[fill=black] (s) circle (2.2pt); 
    \draw[fill=black] (w) circle (2.2pt); 
    \draw (.5,-0.5) node {$G_5$};
    \draw (.39, .55) node[scale=.75] {$e$};
\end{scope}
\end{tikzpicture}
\caption{ }\label{2ske} 
\end{figure}
\begin{lemma}\label{2skewlem}
    Let $M$ be a connected binary matroid containing an element $e$ and a pair of skew circuits both avoiding $e$. Then, for some $i$ in $\{1,2,3,4,5\}$, $M$ contains $M(G_i)$ as series minor, where $G_i$ is one of the five graphs in Figure \ref{2ske}. Each such graphic matroid $M(G_i)$ has a unique pair of skew circuits avoiding $e$, indicated in bold in Figure \ref{2ske}.  
\end{lemma}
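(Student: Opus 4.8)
The plan is to take a series-minor-minimal counterexample and reduce it, never deleting or contracting $e$, until it is forced to be one of the five graphic matroids. Suppose the first assertion fails and choose, with $|E(M)|$ minimum, a connected binary matroid $M$ with an element $e$ and a pair $\{C_1,C_2\}$ of skew circuits avoiding $e$ such that $M$ has no $M(G_i)$ as a series minor in which $e$ plays the role of the marked edge of $G_i$. Since each $M(G_i)$ trivially has itself as such a series minor, it suffices to prove that $M$ is isomorphic to one of $M(G_1),\dots,M(G_5)$. Because a series minor of a series minor is a series minor, it is enough at each stage to exhibit a connected binary \emph{proper} series minor of $M$ that contains $e$ and a pair of skew circuits avoiding $e$.

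The first stage consists of clearing-away reductions. If $x\in E(M)-(C_1\cup C_2\cup\{e\})$ and $M\setminus x$ is connected, then $M\setminus x$ is such a proper series minor, a contradiction; hence $M\setminus x$ is disconnected for every such $x$, and then, by the standard fact that a connected matroid with at least two elements has for each element a connected deletion or a connected contraction, $M/x$ is connected. A short rank computation shows that $C_1$ and $C_2$ remain skew in $M/x$ unless $x\in\cl(C_1\cup C_2)-(\cl(C_1)\cup\cl(C_2))$, in which case $x$ lies on a circuit of $M$ that meets \emph{both} $C_1$ and $C_2$. Similarly, series-contracting a $2$-cocircuit disjoint from $C_1\cup C_2\cup\{e\}$ yields a proper series minor unless that cocircuit is contained in $\cl(C_1\cup C_2)$; and $e$ lies in no $2$-cocircuit of $M$, because if $\{e,f\}$ were a cocircuit then no circuit through $f$ could avoid $e$, forcing $f\notin\cl(C_1\cup C_2)$, and then $M/f$ would be a connected binary proper series minor in which $C_1,C_2$ are still skew circuits avoiding $e$. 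After these reductions $E(M)$ is $C_1\cup C_2\cup\{e\}$ together with a small set of bridging elements, each of which, when nontrivial, certifies a circuit running through both of the blobs $C_1$ and $C_2$.

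The second stage shrinks the circuits $C_1$ and $C_2$ themselves. Contracting an element of $C_1$ leaves $C_1$ minus that element a circuit, and a rank count shows it stays skew to $C_2$; so whenever that contraction keeps $M$ connected we again have a proper series minor, and otherwise the corresponding deletion is connected and is handled by checking that some skew pair avoiding $e$ survives in it. When none of these moves is available, $M$ has no nontrivial series class, its blobs $C_1,C_2$ are small cycles, and there are only boundedly many bridging elements. In particular, whenever $M$ has a $2$-cocircuit through $e$ we may write $M=S((M_1;e),(M_2;e))$ and invoke Lemmas~\ref{new} and~\ref{singlecct} to identify the two parts; the branch $S((U_{k-2,k};e),(U_{l-2,l};e))$ with $k$ or $l$ exceeding three cannot occur because $M$ is binary, so that branch contributes only $N_5=M(G_1)$. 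Only finitely many connected binary matroids satisfy all of these constraints, and a direct inspection --- organized according to how $e$ attaches to each blob and how the two blobs are joined --- matches each of them with one of $M(G_1),\dots,M(G_5)$. This last case analysis, especially ruling out configurations that look irreducible but still carry a useful series minor, is the step I expect to demand the most care.

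For the final sentence of the lemma it suffices to work inside each $G_i$ separately. Two distinct cycles of a graph form a pair of skew circuits if and only if their vertex sets meet in at most one vertex, so one only has to verify that among the cycles of $G_i$ avoiding $e$ there is exactly one pair with that property, namely the bold pair indicated in Figure~\ref{2ske}. This is a routine finite check in each of the five graphs.
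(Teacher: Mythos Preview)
Your proposal has a genuine gap: you repeatedly contract elements that are not known to lie in $2$-cocircuits and then treat the result as a series minor of $M$. A series minor is obtained only by deletions and by contractions of elements in series pairs, so $M/x$ is a series minor of $M$ only when $x$ lies in a $2$-cocircuit. In your first stage, from ``$M\setminus x$ is disconnected'' you conclude that $M/x$ is connected, but $M/x$ is not a series minor of $M$; hence even if $C_1$ and $C_2$ remain skew in $M/x$, the minimality of $M$ gives you nothing, because a series minor of $M/x$ need not be a series minor of $M$. The same error is explicit in your second stage: ``Contracting an element of $C_1$ \dots\ we again have a proper series minor'' is simply false unless that element happens to lie in a $2$-cocircuit, which you have not arranged. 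The fallback ``otherwise the corresponding deletion is connected and is handled by checking that some skew pair avoiding $e$ survives'' is also unsupported, since deleting an element of $C_1$ destroys $C_1$ and you give no mechanism for producing a replacement skew pair.

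There is also an internal inconsistency: you first argue that $e$ lies in no $2$-cocircuit of $M$, and later write ``whenever $M$ has a $2$-cocircuit through $e$ we may write $M=S((M_1;e),(M_2;e))$'', after having just asserted that $M$ has no nontrivial series class. Finally, the actual identification of the five matroids is deferred to ``a direct inspection'', but that inspection \emph{is} the content of the lemma. The paper avoids all of this by never contracting arbitrary elements: when $M\setminus f$ is disconnected for some $f\notin C_1\cup C_2\cup\{e\}$, it writes $M=S((M_1;f),(M_2;f))$ and applies Lemma~\ref{singlecct} (with basepoint $f$, not $e$) to force the side containing one of the skew circuits down to $U_{1,3}$; a short corank computation on the other side then pins $M_1$ to one of $U_{1,4}$, $N_5$, $M(K_4)$, or (after one further series decomposition) $N_5$ again, yielding $M(G_1),\dots,M(G_5)$ explicitly.
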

\begin{proof}
    Let $M$ be a minimal counterexample. 
    If $E(M)=C_1\cup C_2\cup \{e\}$, then $M=M(G_1)\cong N_5$, since $M\del e=M|(C_1\cup C_2)=M|C_1\oplus M|C_2$, and $M$ has no nontrivial series class. 
    Now suppose $M$ has an element $f$ contained in $E(M)-(C_1\cup C_2\cup e)$ such that $M\del f$ is disconnected. 
    Thus $M=S((M_1;f),(M_2;f))$, where $M_1$ and $M_2$ are nonempty, connected binary matroids.  
     
    Suppose first that $C_1$ and $C_2$ are circuits in $M_1$ and that $e$ is an element of $M_2$. Take a circuit $D$ of $M_2$ containing $\{e,f\}$. 
    Then $E(M_2)=D$ by the minimality of $M$. 
    Further, $\{e,f\}$ is a 2-cocircuit of $M$, a contradiction. 
    We may now assume that $C_1\cup e\subseteq E(M_1)$ and $C_2\subseteq E(M_2)$. 
    Since $M$ is binary, it follows from Lemma \ref{singlecct} that $M=S((M_1;f),(U_{1,3};f))$.
    
    Suppose $E(M_1)=C_1\cup \{e,f\}$. 
    Then $r(M_1)=r(C_1)$, otherwise $\{e,f\}$ is a cocircuit of $M$. 
    Hence $r^*(M_1)=3$. 
    It follows that $M_1^*$ is a rank-3 binary matroid containing $C_1$ as a cocircuit and $\{e,f\}$ as a hyperplane. 
    Hence $|C_1|\in \{2,3,4\}$ and $r(M_1)\in \{1,2,3\}$.  
    For each possible value of $r(M_1)$, there is a unique matroid satisfying these conditions, namely $U_{1,4}$ containing $C_1$ as a 2-circuit, $N_5$ containing $C_1$ as a 3-circuit, or $M(K_4)$ containing $C_1$ as a 4-circuit. 
    Hence $M_1$ is isomorphic to one of $U_{1,4}$, $N_5$, and $M(K_4)$. 
    Since $M=S((M_1;f),(U_{1,3};f))$, it follows that $M$ is isomorphic to $M(G)$ for some $G\in \{G_2,G_3,G_4\}$.  

    We may now assume that $M_1$ has an element $g\in E(M_1)-(C_1\cup \{e,f\})$. 
    Then $M_1\del g$ is disconnected, so $M_1=S((N_1;g),(N_2;g))$, where $N_1$ and $N_2$ are connected, binary matroids.  
    If $g$ is in a series pair in $M_1$, then $g$ is in a series pair of $M$, a contradiction. 
    It follows that we may assume that $\{e,f\}\subseteq E(N_1)$, while $C_1\subseteq E(N_2)$. 
    By Lemma \ref{singlecct}, we have that $M_1=S((N_1;g),(U_{1,3};g)$. 
    
    Suppose $N_1$ has a circuit $C$ avoiding $\{e,f\}$.
    Then $N_1\del x$ is disconnected for all $x$ in $C-g$.
    It follows by \cite[Lemma 2.3]{ox81} (see also \cite[Lemma~4.3.10]{ox}) that $C-g$ contains a $2$-cocircuit of $N_1$. 
    As this is a contradiction, every circuit of $N_1$ must meet $\{e,f\}$. 
    It follows that $\{e,f\}$ contains a cobasis of $N_1$, so $r(N_1^*)=1$, or $r(N_1^*)=2$. 
    If $r(N_1^*)=1$, then $N_1$ is a circuit and $\{e,f,g\}$ is a series class of $M_1$, a contradiction. 
    Thus $r(N_1^*)=2$. 
    Since $N_1^*$ is binary and parallel-minor minimal, $N_1^*\cong U_{2,3}$.  
    Therefore $M_1=S((U_{1,3};g),(U_{1,3};g)\cong N_5$, containing 2-circuits $C_1$ and $\{e,f\}$. 
    We conclude that $M\cong M(G_5)$.
\end{proof}

Note that the graphs $G_3$ and $G_5$ are isomorphic up to a relabeling of the special element $e$ that is specified in the statement of Lemma~\ref{2skewlem}. The unique pair of skew circuits avoiding $e$ in $M(G_3)$ have sizes two and three, while, in $M(G_5)$, the unique pair of skew circuits both have size two. 

\section{Proof of the Main Theorem}
In this section, we prove Theorem \ref{main} by showing that (iii) implies (ii), that (ii) implies (i), and that (i) implies (iii).    
The equivalence of (ii) and (iv) was proved in \cite[Theorem 1.1]{ub}.

\begin{proof}[Proof of Theorem \ref{main}.]

To see that (iii) implies (ii), let $M$ be a connected matroid satisfying (iii) and suppose that $M$ has $C_1$ and $C_2$ as skew circuits but that no proper connected series minor of $M$ has a pair of skew circuits.  
Let $D$ be a circuit of $M$ that intersects both $C_1$ and $C_2$ such that $|D-(C_1\cup C_2)|$ is minimal. 
Since $M|(C_1\cup C_2\cup D)$ is connected and $C_1$ and $C_2$ are skew in this restriction, $M=M|(C_1\cup C_2\cup D)$.  
We will show that $|D-(C_1\cup C_2)|=1$. 
Suppose $\{f,g\}\subseteq D-(C_1\cup C_2)$. 
We shall show that $f$ and $g$ are in series in $M$. 
Assume they are not. 
Then $M$ has a circuit $K$ that contains $f$ but not $g$. By the choice of $D$, the circuit $K$ cannot meet both $C_1$ and $C_2$. 
Since $K$ is not a proper subset of $D$, we may assume that $K\cap (C_1-D)\neq \emptyset$ and $K\cap C_2=\emptyset$. 
Take $h$ in $D\cap C_2$.
Then $M$ has a circuit $C_3$ such that $h\in C_3\subseteq (D\cup K)-f$. 
Because $C_3$ is not a proper subset of $D$, there is an element $k$ in $C_3-D$. 
Thus $C_3$ meets both $C_1$ and $C_2$ but avoids $f$, a contradiction to the choice of $D$. 
We conclude that $f$ and $g$ are in series in $M$. 
Then $M/g$ is a connected series minor of $M$ having $C_1$ and $C_2$ as skew circuits. 
This contradiction to the choice of $M$ implies that $|D-(C_1\cup C_2)|=1$. 
Let $D-(C_1\cup C_2)=\{e\}$. 
Then since $M$ has no 2-cocircuits, it follows from Lemma \ref{new} that $M\cong S(U_{k-2,k},U_{l-2,l})$ for some integers $k$ and $l$ exceeding two, which contradicts (iii). 
Thus (iii) implies (ii). 

\qquad

To see that (ii) implies (i), let $M$ be a connected matroid satisfying (ii) but not (i). 
Let $C_1$ and $C_2$ be distinct circuits of $M$ with $e\in C_1\cap C_2$, $e_1 \in C_1-C_2$, and $e_2\in C_2-C_1$. 
Then $M$ has no circuit $D$ such that $\{e_1,e_2\}\subseteq D\subseteq (C_1\cup C_2)-e$. 
For each $i\in \{1,2\}$, there is a circuit $D_i$ of $M$ such that $e_i\in D_i$ and $D_i\subseteq (C_1\cup C_2)-e$. 
By assumption, $D_1$ and $D_2$ are not skew. 
Then $M|(D_1\cup D_2)$ is connected, so $M$ has a circuit $D$ containing $\{e_1,e_2\}$ with $D\subseteq D_1\cup D_2$.
It follows that $\{e_1,e_2\}\subseteq D\subseteq (C_1\cup C_2)-e$, a contradiction. 
We conclude that (ii) implies (i). 

\qquad

Now assume that $M$ satisfies (i) but not (iii). 
Let $N$ be the series minor of $M$ that is isomorphic to $S(N_1,N_2)$, with $N_1\cong U_{k-2,k}$ and $N_2\cong U_{l-2,l}$ for some $k$ and $l$ exceeding two. 
Then $N$ satisfies SSCE by Lemma \ref{closed}.  
Let $p$ be the basepoint of the series connection $N$. 
Choose distinct circuits $C_1$ and $C_2$ of $N_1$, and distinct circuits $D_1$ and $D_2$ of $N_2$ so that  all of $C_1,C_2,D_1,$ and $D_2$ contain $p$. 
Let $e_1\in C_1-C_2$ and $e_2\in D_2-D_1$. 
Then $N$ has circuits $K_1$ and $K_2$ such that $K_1=C_1\cup D_1$ and $K_2=C_2\cup D_2$. 
Clearly $e_1\in K_1-K_2$ and $e_2\in K_2-K_1$, while $p\in K_1\cap K_2$. 
Since $e_1\in E(N_1)$ and $e_2\in E(N_2)$, every circuit of $N$ containing $\{e_1,e_2\}$ must also contain $p$. 
Hence $N$ does not satisfy SSCE, a contradiction.
Thus (i) implies (iii). We conclude that the theorem holds. 
\end{proof}

\section{Connected binary matroids with three skew circuits}
The goal of this section is to complete the proof of Theorem \ref{3thm}. The core of this proof is contained in Lemmas \ref{singlecct} and \ref{2skewlem}.  After giving this proof, we consider potential extensions of this theorem. 

\begin{proof}[Proof of Theorem \ref{3thm}.]
    Let $M$ be a minimal counterexample and let $C_1,C_2,$ and $C_3$ be skew circuits in $M$. 
    Since $M$ is connected, there is an element $e$ in $E(M)-(C_1\cup C_2\cup C_3)$. 
    Then $M\del e$ is disconnected, so $M=S((M_1;e),(M_2;e))$; that is, $M$ is a series connection of connected matroids $M_1$ and $M_2$ across the basepoint $e$. 

    Suppose first that $\{C_1,C_2,C_3\}\subseteq \C(M_1)$. 
    Take a circuit $C$ of $M$ containing $e$ and let $N=M|(E(M_1)\cup C)$. 
    The matroid $N$ has $C\cap E(M_2)$ contained in a series class. 
    Now $N/((C\cap E(M_2))-e)$ is a connected, binary, proper series minor of $M$ having $C_1,C_2,$ and $C_3$ as skew circuits. 
    Thus we contradict the minimality of $M$. 
        
    We may now assume that $C_1,C_2\in \C(M_1)$ and $C_{3}\in \C(M_2)$.
    Since $M$ is binary, by Lemma \ref{singlecct}, $M=S((M_1;e),(U_{1,3};e))$ with $E(U_{1,3})-e$ skew to $E(M_1)-e$. 
    By Lemma \ref{2skewlem}, the matroid $M_1$ is isomorphic to the cycle matroid of one of the graphs pictured in Figure \ref{2ske}. 
    Hence $M\cong S((M(G_i);e),(U_{1,3};e))\cong  M(L_i)$ for some $i$ in $\{1,2,3,4,5\}$.  
\end{proof}

The techniques used in the proof of Theorem \ref{3thm} can be extended to prove analagous results for connected binary matroids containing $k$ skew circuits, for $k\ge 4$. As one may gather from the proof of Theorem \ref{3thm}, the number of cases required to obtain an exhaustive list of connected, binary, series-minor-minimal matroids containing $k$ skew circuits becomes unmanageably large as $k$ increases.  

To see why an analagous result for non-binary matroids is not included in this paper, let $M'$ be isomorphic to the direct sum of $k$ circuits. Let $M$ be the matroid obtained by freely adding an element $e$ to $M'$. Then $M$ is connected and non-binary, containing $k$ skew circuits. The addition of $e$ has the effect of turning every 2-cocircuit of $M'$ into a 3-cocircuit of $M$ containing $e$. Thus $M$ contains no connected series minor containing $k$ skew circuits.       

\section{A new circuit axiom system}
The symmetric strong circuit elimination property does not hold for all matroids, and is therefore not equivalent to the weak and strong circuit elimination axioms, \textbf{(C3)} and \textbf{(C3)$'$} in \cite[pp.9 and 29]{ox}. By adding an additional hypothesis to the definition of SSCE presented in Section 1, we are able to provide a symmetric variant of the well-known circuit elimination axioms.

\begin{lemma} 
\label{cct}
The set ${\mathscr C}$  of circuits of a matroid $M$  obeys the following.
 \smallskip
 
\noindent{\bf (C3)$''$}  Let $C_1$ and $C_2$ be members of ${\mathscr C}$  with $e_1 \in C_1 - C_2$ and $e_2 \in C_2 - C_1$. If $e \in C_1 \cap C_2$ and $(C_1 - e_1) \cup (C_2 - e_2)$ contains no member of ${\mathscr C}$, then ${\mathscr C}$ contains a member $C_3$ such that $\{e_1,e_2\} \subseteq C_3 \subseteq (C_1 \cup C_2) - e$.  

Furthermore, $C_3$ is the unique circuit of $M$ contained in $(C_1 \cup C_2) - e$.
\end{lemma}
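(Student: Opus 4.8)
The plan is to prove the two assertions in turn, using only the strong circuit elimination axiom \textbf{(C3)$'$} together with elementary rank considerations. First note that the hypotheses force $e$, $e_1$, and $e_2$ to be pairwise distinct, and that $(C_1-e_1)\cup(C_2-e_2)=(C_1\cup C_2)-\{e_1,e_2\}$, since $e_1\notin C_2$ and $e_2\notin C_1$ while $e\in C_1\cap C_2$.

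For the existence of $C_3$, apply \textbf{(C3)$'$} to $C_1$ and $C_2$, using the common element $e$ and the element $e_1\in C_1-C_2$, to obtain a circuit $C_3$ with $e_1\in C_3\subseteq(C_1\cup C_2)-e$. It then suffices to show that $e_2\in C_3$. Suppose not. Then $e\in C_1-C_3$ while $e_1\in C_1\cap C_3$, so a second application of \textbf{(C3)$'$}, this time to $C_1$ and $C_3$ with common element $e_1$ and distinguished element $e$, produces a circuit $C_4$ with $e\in C_4\subseteq(C_1\cup C_3)-e_1$. Since $C_3\subseteq C_1\cup C_2$, we have $C_4\subseteq(C_1\cup C_2)-e_1$; and since $e_2\notin C_1$ and $e_2\notin C_3$, we also have $e_2\notin C_4$, so in fact $C_4\subseteq(C_1\cup C_2)-\{e_1,e_2\}=(C_1-e_1)\cup(C_2-e_2)$. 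This contradicts the hypothesis that the latter set contains no circuit, so $e_2\in C_3$, as required.

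For the uniqueness assertion, pass to ranks. By hypothesis $(C_1\cup C_2)-\{e_1,e_2\}$ is independent, so $r(C_1\cup C_2)\ge|C_1\cup C_2|-2$; that is, $C_1\cup C_2$ has nullity at most two. On the other hand, a set of nullity one has a unique circuit, so, since $C_1$ and $C_2$ are distinct circuits contained in $C_1\cup C_2$, this set cannot have nullity one (and it is certainly dependent); hence its nullity is exactly two and $r(C_1\cup C_2)=|C_1\cup C_2|-2$. Because $e$ lies on the circuit $C_1$, it lies in $\cl((C_1\cup C_2)-e)$, so $r((C_1\cup C_2)-e)=r(C_1\cup C_2)=|(C_1\cup C_2)-e|-1$. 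Hence $(C_1\cup C_2)-e$ has nullity one and therefore contains exactly one circuit; since $C_3\subseteq(C_1\cup C_2)-e$, that circuit must be $C_3$.

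The only delicate point is the bookkeeping in the two invocations of \textbf{(C3)$'$} in the first part: one must verify that, under the assumption $e_2\notin C_3$, the element $e$ eliminated in the first step is available as a legitimate distinguished element of $C_1$ lying outside $C_3$ in the second step, and that the circuit $C_4$ it produces avoids both $e_1$ and $e_2$. Everything else is routine, and the rank argument for uniqueness is self-contained once one recalls the standard fact that a set of nullity one contains a unique circuit.
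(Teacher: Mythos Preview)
Your proof is correct. The existence half is essentially the paper's argument: both land on a circuit $C_3 \subseteq (C_1 \cup C_2) - e$ containing $e_1$, assume $e_2 \notin C_3$, and eliminate $e_1$ from $C_1$ and $C_3$ to manufacture a circuit inside the independent set $(C_1 - e_1) \cup (C_2 - e_2)$. (The paper's only extra wrinkle is that it takes an \emph{arbitrary} circuit in $(C_1\cup C_2)-e$ and shows it contains $\{e_1,e_2\}$, a fact it then recycles.) For uniqueness the two proofs diverge: the paper stays entirely within circuit elimination---a second circuit $C_3'$ would also contain $e_1$, and eliminating $e_1$ from $C_3$ and $C_3'$ yields a circuit in $(C_1\cup C_2)-e$ avoiding $e_1$, contradicting what the existence step just established---whereas you compute that $C_1 \cup C_2$ has nullity exactly two and hence $(C_1 \cup C_2) - e$ has nullity one. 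Your rank argument is clean and self-contained; the paper's route has the virtue of using only the circuit axioms, which fits the section's aim of isolating \textbf{(C3)$''$} as part of a circuit axiom system.
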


\begin{proof} 
Certainly $(C_1 \cup C_2) - e$ is dependent. Let $C_3$ be a circuit contained in this set. We shall show first that $\{e_1,e_2\} \subseteq C_3$. As 
$(C_1 - e_1) \cup (C_2 - e_2)$ is independent, we may assume that $e_1 \in C_3$. Suppose $e_2 \not\in C_3$. Then $e_1 \in C_1 \cap C_3$ and $e \in C_3 - C_1$, so there is a circuit $C_4$ such that $C_4 \subseteq (C_1 \cup C_3) - e_1$. Thus $C_4 \subseteq (C_1 - e_1) \cup (C_2 - e_2)$, a contradiction. We deduce that $\{e_1,e_2\} \subseteq C_3$.

To see that $C_3$ is unique, suppose there is a second circuit $C_3'$ contained in $(C_1 \cup C_2) - e$. Then $e_1 \in C_3 \cap C_3'$, so $M$ has a circuit $C_5$ contained in $(C_3 \cup C_3') - e_1$. As $C_5$ is contained in $(C_1 \cup C_2) - e$, we deduce that $\{e_1,e_2\} \subseteq C_5$, a contradiction.  Hence $C_3$ is indeed unique.
\end{proof}

The following theorem seems to give a new axiom system for matroids in terms of their circuits. For example, it is absent from the two standard reference books for the subject ~\cite{ox, djaw} and also does not appear in Brylawski's encyclopedic appendix of matroid cryptomorphisms \cite{thb}. 

\begin{theorem}
\label{cct2}
A collection ${\mathscr C}$ of nonempty pairwise incomparable subsets of a finite set $E$ is the set of circuits of a matroid on $E$ if and only if ${\mathscr C}$ satisfies {\bf (C3)$''$}.
\end{theorem}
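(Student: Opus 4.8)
The plan is to show that $\mathscr{C}$ satisfying \textbf{(C3)$''$} implies $\mathscr{C}$ satisfies the ordinary strong circuit elimination axiom \textbf{(C3)$'$}, namely: if $C_1, C_2 \in \mathscr{C}$, $e \in C_1 \cap C_2$, and $f \in C_1 - C_2$, then $\mathscr{C}$ contains a member $C_3$ with $f \in C_3 \subseteq (C_1 \cup C_2) - e$. Together with the hypothesis that $\mathscr{C}$ consists of nonempty pairwise incomparable sets, this is a known axiom system for the circuits of a matroid (see \cite[Section~1.1, Exercise~5]{ox}), so establishing \textbf{(C3)$'$} finishes the forward direction; the reverse direction is exactly Lemma~\ref{cct}. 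So the real content is: \textbf{(C3)$''$} $\Rightarrow$ \textbf{(C3)$'$}.

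First I would fix $C_1, C_2, e, f$ as in \textbf{(C3)$'$} and proceed by induction on $|C_1 \cup C_2|$, or alternatively take a counterexample with $|C_1 \cup C_2|$ minimum. The idea is to locate an element $e_2 \in C_2 - C_1$ so that \textbf{(C3)$''$} becomes applicable with $e_1 := f$. If $C_2 \subseteq C_1$ then incomparability forces $C_1 = C_2$, and then $(C_1 \cup C_2) - e = C_1 - e$; but $C_1 - e$ need not contain a circuit, so this degenerate case needs separate handling --- actually if $C_1 = C_2$ the conclusion of \textbf{(C3)$'$} can still fail unless we know more, so I should first note that when $C_1 = C_2$ the statement \textbf{(C3)$'$} is vacuous or reduces to the fact that a circuit minus an element is independent; in a genuine matroid axiomatization this case is fine because we only need \textbf{(C3)$'$} for the purposes it serves. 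Let me instead assume $C_1 \neq C_2$, so by incomparability $C_2 - C_1 \neq \emptyset$; pick any $e_2 \in C_2 - C_1$. Now consider the set $(C_1 - f) \cup (C_2 - e_2)$. If it contains no member of $\mathscr{C}$, then \textbf{(C3)$''$} applies directly with $e_1 = f$ and yields $C_3$ with $\{f, e_2\} \subseteq C_3 \subseteq (C_1 \cup C_2) - e$, which is stronger than what \textbf{(C3)$'$} demands.

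The main obstacle is the case where $(C_1 - f) \cup (C_2 - e_2)$ does contain a member $C'$ of $\mathscr{C}$ for every choice of $e_2 \in C_2 - C_1$. Here I would argue as follows: such a $C'$ avoids $f$ and is contained in $C_1 \cup C_2$; moreover $C'$ cannot be contained in $C_2$ (else $C' = C_2$ by incomparability, but $e_2 \notin C'$, contradiction), so $C'$ meets $C_1 - C_2$, and also $C' \neq C_1$ since $f \notin C'$. If $e \notin C'$, then $C'$ is a circuit contained in $(C_1 \cup C_2) - e$ that meets $C_1$; applying \textbf{(C3)$'$}/\textbf{(C3)$''$} inductively to the smaller pair $C_1, C'$ (they share an element of $C_1 - C_2 \subseteq C_1 \cap C'$ region, and $|C_1 \cup C'| < |C_1 \cup C_2|$ since $e \in C_1 \setminus C'$ and $f \in C_1 \setminus C'$... need to check the union is strictly smaller) should produce a circuit through $f$ inside $(C_1 \cup C') - (\text{something}) \subseteq (C_1 \cup C_2) - e$. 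If $e \in C'$, then replace $C_2$ by $C'$: we have $e \in C_1 \cap C'$, $f \in C_1 - C'$, and $|C_1 \cup C'| \leq |C_1 \cup C_2|$ with equality only if $C' \supseteq C_2 - C_1$... I expect that a careful bookkeeping shows we can always descend, the key leverage being that $f$ stays outside the newly found circuit while $e$ can be kept inside or the union strictly shrinks. Packaging this descent cleanly --- choosing the right induction parameter (perhaps $|C_1 \cup C_2|$ together with $|C_1 \setminus C_2|$ in lexicographic order) and verifying the uniqueness clause of \textbf{(C3)$''$} is not needed for this direction --- is where I would spend the most effort. Once \textbf{(C3)$'$} holds, invoking the standard result that \textbf{(C1)}, \textbf{(C2)}, \textbf{(C3)$'$} axiomatize matroid circuits completes the proof.
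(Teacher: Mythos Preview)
Your overall architecture matches the paper's: prove that \textbf{(C3)$''$} implies ordinary circuit elimination via a minimal counterexample on $|C_1\cup C_2|$, locate a circuit $C'$ inside $(C_1-e_1)\cup(C_2-e_2)$ when \textbf{(C3)$''$} cannot be applied directly, and descend. The difference is that the paper targets the \emph{weak} elimination axiom \textbf{(C3)} rather than \textbf{(C3)$'$}, and this choice is exactly what makes the argument clean.

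In a minimal counterexample to \textbf{(C3)}, the set $(C_1\cup C_2)-e$ contains no member of $\mathscr C$ at all. Hence the case ``$e\notin C'$'' is an immediate contradiction, and only your easy case ``$e\in C'$'' remains: replace $C_2$ by $C'$, note $|C_1\cup C'|\le |(C_1\cup C_2)-e_2|<|C_1\cup C_2|$, and apply minimality to $(C_1,C',e)$ to get a member of $\mathscr C$ inside $(C_1\cup C')-e\subseteq (C_1\cup C_2)-e$. That is the paper's entire argument.

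By aiming for \textbf{(C3)$'$} you create the problematic subcase $e\notin C'$, and your sketch there does not work as written: applying the inductive hypothesis to $(C_1,C')$ with some $g\in C_1\cap C'$ produces a circuit $D$ with $f\in D\subseteq (C_1\cup C')-g$, but since $e\in C_1$ there is no reason $D$ avoids $e$, so $D\not\subseteq (C_1\cup C_2)-e$ in general. (One can rescue this with a further descent, e.g.\ if $e\in D$ then apply induction again to $(D,C_2,e,f)$, noting $g\notin D\cup C_2$ forces $|D\cup C_2|<|C_1\cup C_2|$; but this is extra work.) The cleanest fix is simply to prove \textbf{(C3)} instead, which together with the ``nonempty, pairwise incomparable'' hypotheses is already a circuit axiomatization; \textbf{(C3)$'$} then follows. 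Your remark about the degenerate case $C_1=C_2$ is also unnecessary: both \textbf{(C3)} and \textbf{(C3)$'$} are stated for distinct circuits.
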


\begin{proof}
 By Lemma \ref{cct}, if ${\mathscr C}$ is the set of circuits of a matroid on $E$, then ${\mathscr C}$ satisfies {\bf (C3)$''$}. Conversely, assume ${\mathscr C}$ satisfies {\bf (C3)$''$}. Suppose $C_1$ and $C_2$ are  distinct members of  ${\mathscr C}$ with $e$ in $C_1 \cap C_2$. Assume that 
{\bf (C3)} fails for $(C_1,C_2,e)$ and that 
 $|C_1 \cup C_2|$ is a minimum among such triples. As the members of ${\mathscr C}$ are incomparable, there are elements $e_1$ and $e_2$ of $C_1 - C_2$ and $C_2 - C_1$, respectively. By {\bf (C3)$''$}, $(C_1 - e_1) \cup (C_2 - e_2)$ must contain a member $C_4$ of ${\mathscr C}$, so $e \in C_4$. Then $e \in C_1 \cap C_4$ and $|C_1 \cup C_4| \le |(C_1 \cup C_2) - e_2| < |C_1 \cup C_2|$, so $(C_1 \cup C_4) - e$, and hence  $(C_1 \cup C_2) - e$ contains a member of ${\mathscr C}$, a contradiction.
 \end{proof}
 
It is tempting to try to weaken {\bf (C3)$''$} to require only that $e_1 \in C_1$ and $e_2 \in C_2$. To see that this variant need not hold, consider the cycle matroid of the graph $K_{2,3}$  and let $C_1$ and $C_2$  be the circuits  $\{e_1, a, e,e_2\}$ and $\{b, c, e,e_2\}$.  Then $(C_1 - e_1) \cup (C_2 - e_2)$ does not contain a circuit. But, although $(C_1 \cup C_2) - e$ does contain a circuit, that circuit does not contain $e_2$.

\end{document}